\pdfoutput=1
\documentclass[10pt]{article}

\usepackage[twoside,centering,a4paper,bottom=2.8cm]{geometry}

\usepackage{graphicx,color}

\usepackage{amssymb,amsmath,amsthm}

\theoremstyle{plain}
  \newtheorem{theorem}             {Theorem}    [section]
  \newtheorem{lemma}      [theorem]{Lemma}
  \newtheorem{corollary}  [theorem]{Corollary}
\theoremstyle{definition}
  

\renewcommand{\phi}{\varphi}                                                    
\renewcommand{\epsilon}{\varepsilon}                                            
\newcommand{\eps}{\varepsilon}                                                  


\DeclareMathOperator{\sign}{sign}


\newcommand{\R}{\mathbb{R}}

\newcommand{\wto}{\rightharpoonup}

\newcommand{\sm}{\setminus}

\newcommand{\C}{\mathcal{C}}

\newcommand{\E}{\mathcal{E}}
\newcommand{\F}{\mathcal{F}}

\renewcommand{\P}{\mathcal{P}}
\renewcommand{\S}{\mathbb{S}}

\numberwithin{figure}{section}
\numberwithin{table}{section}
\numberwithin{equation}{section}

\renewenvironment{abstract}{\footnotesize\begin{quote}\textbf{Abstract.}~}%
  {\end{quote}}





\title{Snapping elastic curves as a one-dimensional analogue of
  two-component lipid bilayers}

\author{%
  {\scshape Michael Helmers}\\%
  {\footnotesize Oxford Centre for Nonlinear PDE}\\[-1ex]
  {\footnotesize Mathematical Institute, University of Oxford}\\[-1ex]%
  {\footnotesize 24-29 St Giles', Oxford, OX1 3LB, United Kingdom}\\[-1ex]%
  {\footnotesize Email: {\tt helmers@maths.ox.ac.uk}}%
}

\date{}

\begin{document}

\maketitle


\begin{abstract}
  In order to study a one-dimensional analogue of the spontaneous
  curvature model for two-component lipid bilayer membranes we
  consider planar curves that are made of a material with two
  phases. Each phase induces a preferred curvature to the curve, and
  these curvatures as well as phase boundaries may lead to the
  development of kinks. We introduce a family of energies for smooth
  curves and phase fields, and we show that these energies
  $\Gamma$-converge to an energy for curves with a finite number of
  kinks. The theoretical result is illustrated by some numerical
  examples.

  \emph{Keywords}: $\Gamma$-convergence, elastic curves, phase field
  model, two-component membrane.

  \emph{2000 MSC}: 49J45, 53A04, 82B26, 92C05.
\end{abstract}


\section{Introduction and Result}

Integral functionals depending on curvature are of geometric interest
and arise in a variety of applications such as image processing and
models for elastic lines or thin shells\cite{W93,S93,BM04,LS84}; in
particular, they appear in the study of biological
membranes.\cite{C70,H73,S97} In the spontaneous-curvature model for
bilayer vesicles with two lipid components equilibrium shapes are
described as surfaces minimising the energy
\begin{equation}
  \label{eq:Helfrich_energy}
  \sum_{i=1,2} \int_{\Sigma_i} a_i(H-C_i)^2 + b_i K \,dS
  + \sigma|\partial\Sigma_1|
\end{equation}
among all closed surfaces $\Sigma=\Sigma_1 \cup \Sigma_2$ with fixed
areas $|\Sigma_i|$ and fixed enclosed volume.\cite{BHW03,JL96} Here
$H$ and $K$ are the mean and Gauss curvature of the membrane surface
$\Sigma$, $a_i$ and $b_i$ are parameters related to bending resistance
of the membrane, and $\sigma$ is the line tension at the component
boundary $|\partial\Sigma_i|$; the spontaneous curvatures $C_i$ are
supposed to reflect an asymmetry in the membrane.

J\"ulicher and Lipowsky\cite{JL96} study the Euler-Lagrange equations
of \eqref{eq:Helfrich_energy} for axially symmetric membranes with
exactly one interface represented by a point on a rotated curve.  They
briefly discuss the possibility of different smoothness conditions for
the curve at the interface, their analysis, however, is done for
smooth membranes only. Du, Wang\cite{DW08} and Lowengrub, R{\"a}tz,
Voigt\cite{LRV09} perform numerical simulations using a phase field
for both the membrane and the lipid components; convergence to the
sharp interface model is obtained by asymptotic analysis or under
strong smoothness assumption on the limit surface.

In this paper we are interested in a one-dimensional analogue of the
spontaneous-curvature model for two component vesicles.  We consider
curves made of a material with two phases, each of which induces a
preferred bending to the curve; in contrast to the above studies for
membranes we do not enforce smoothness of the curves a priori. We
analyse an approximation by more regular curves, which, in particular,
can be treated numerically in an easier way than the model with kinks.

More precisely, we consider closed plane curves $q$ of fixed length $L$
that are twice weakly differentiable and regular except for a finite
number of points. These curves can be parametrised with unit speed
over the circle $\S^1$, when it is given a scaled standard metric to
have length $L$. We require that the squared mean curvature
$\kappa_q^2=|q''|^2$ of $q$ is integrable, so we let
\begin{align*}
  \C := \big\{ q \in C(\S^1;\R^2)
  &: \text{there exists a set } S_q
  \text{ of finitely many points s.\,t. }\\
  &\quad
  q \in H^2(\S^1 \sm S_q;\R^2), \,|q'|=1 \text{ in } \S^1 \sm S_q
  \big\}
\end{align*}
be the set of parametrised curves which may have a finite number of
kinks.  Indeed, because $H^2$ embeds continuously in $C^1$, $S_q$ is
the set of discontinuities of the tangent vector $q'$ or, as $|q'|$ is
constant, of the tangent angle. Note also that $\C \subset
H^1(\S^1;\R^2)$.

The material phases of $q \in \C$ are determined by a function
$v:\S^1 \to \{\pm1\}$ having at most a finite number of jumps
and satisfying the volume constraint
\begin{equation}
  \label{eq:volume_constraint}
  \int_{\S^1} v \,dt = m L
\end{equation}
for fixed $m\in(-1,1)$. We denote the set of such functions by $\P$
and the jump set of $v\in\P$ by $S_v$; note that $\P \subset
BV(\S^1;\{\pm1\})$.

On basis of the membrane model we consider for $(q,v) \in \C\times\P$
the energy
\begin{equation}
  \label{eq:energy}
  \E(q,v)
  := \int_{\S^1 \sm (S_q \cup S_v)}
  \left(\kappa_q-C(v)\right)^2 dt
  + \sum_{s\in S_q \cup S_v} \left( \sigma + \hat\sigma |[q'](s)| \right).
\end{equation}
Compared to \eqref{eq:Helfrich_energy} we have dropped the Gauss
curvature term, as curves have no intrinsic curvature. Furthermore,
for notational simplicity we have set all bending rigidities to one
and let only the spontaneous curvatures $C(\pm1)$ be
phase-dependent. Different rigidities can be treated similar to the
spontaneous curvature below.

In \eqref{eq:energy} interfaces without kinks are penalised by the
constant energy $\sigma$, while kinks carry an additional ``bending
energy'' $\hat\sigma|[q']|(s)$ where $\hat\sigma$ is a constant and
$|[q'](s)|$ denotes the modulus of the angle enclosed by the two
one-sided tangent vectors at $s$ modulo $2\pi$.  Note, that kinks may
not only occur at interfaces, but also within a phase. Such kinks can
be seen as resembling budding transitions or non-smooth limit shapes
of even single-component membranes; we shall call them \emph{ghost
  interfaces}.

As an approximation to this model we consider curves from the set
\begin{equation*}
  \C_\eps
  := \big\{ q \in H^2(\S^1;\R^2):
  \, |q'|=1 \text{ in } \S^1 \big\}.
\end{equation*}
We replace sharp material phases by phase fields $v \in H^1(\S^1)$
with the constraint \eqref{eq:volume_constraint}, denoting this set of
functions by $\P_\eps$.
For $\eps>0$ and $(q,v) \in \C_\eps \times \P_\eps$ we consider the
energy
\begin{equation}
  \label{eq:eps-energy}
  \E_\eps(q,v)
  := \int_{\S^1} v^2 \left(\kappa_q-C(v)\right)^2 dt
  + \int_{\S^1} \eps v'^2 + \tfrac{1}{\eps} \Phi(v) \,dt
  + \eps \int_{\S^1} \kappa_q^2 \,dt,
\end{equation}
where $\Phi:\R \to [0,\infty)$ is a continuous double-well potential,
that is zero only in $\pm1$ and satisfies $\Phi(v)\to\infty$ as
$v\to\pm\infty$. For notational simplicity we assume that $\Phi$ is
symmetric with respect to the origin, and for technical reasons that
it is $C^2$ in a neighbourhood of its minima.  The function
$C:\R\to\R$ is a continuous and bounded extension of $C(\pm1)$.

The first integral in \eqref{eq:eps-energy} resembles the curvature
integral in \eqref{eq:energy}; having the phase field in front of the
curvature term enables the curves to approach kinks as $\eps\to0$.
The third integral, however, penalises regions of very large curvature
and accounts for a kink's bending energy in the limit.  Finally,
interface costs are contributed by the second integral.

Below we show that the $\eps$-energies \eqref{eq:eps-energy} converge
to \eqref{eq:energy} with
\begin{equation}
  \label{eq:constants}
  \sigma = 2\int_{-1}^{1} \sqrt{\Phi(v)} \,dv
  \qquad\text{and}\qquad
  \hat\sigma = 2\sqrt{\Phi(0)}.
\end{equation}
In order to formulate and prove our theorem we fix these constants as
in \eqref{eq:constants}. We extend the energies $\E_\eps$ and $\E$
to the space $H^1(\S^1;\R^2) \times L^1(\S^1)$ by setting
$\E_\eps(q,v)=\E(q,v)=\infty$ whenever $(q,v)$ does not belong to
$\C_\eps \times \P_\eps$ and $\C \times \P$, respectively.

\begin{theorem}
  \label{main_theorem}
  The energies $\E_\eps$ are equi-coercive, that is any sequence
  $(q_\eps,v_\eps) \in \C_\eps \times \P_\eps$ with uniformly bounded
  energy admits a subsequence converging strongly in $H^1(\S^1;\R^2)
  \times L^1(\S^1)$ to some $(q,v) \in \C \times \P$.

  Furthermore, the $\E_\eps$ $\Gamma$-converge to $\E$ as $\eps \to
  0$, that is
  \begin{itemize}
  \item for any sequence $(q_\eps,v_\eps)$ that converges to some
    $(q,v)$ in $H^1(\S^1;\R^2) \times L^1(\S^1)$ as $\eps\to0$ we have
    \begin{equation*}
      \liminf_{\eps\to0} \E_\eps(q_\eps,v_\eps) \geq \E(q,v);
    \end{equation*}
  \item for any $(q,v)$ there is a sequence $(q_\eps,v_\eps)$
    converging to $(q,v)$ in $H^1(\S^1;\R^2) \times L^1(\S^1)$ such
    that
    \begin{equation*}
      \limsup_{\eps\to0} \E_\eps(q_\eps,v_\eps) \leq \E(q,v).
    \end{equation*}
  \end{itemize}
\end{theorem}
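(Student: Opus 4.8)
The plan is to treat the three assertions---equi-coercivity, the $\liminf$ inequality, and the existence of a recovery sequence---separately, letting the Modica--Mortola structure of the second integral govern the phase field $v$ and the interplay of all three integrals govern the geometry of $q$. Throughout I write $\theta_\eps$ for the tangent angle of $q_\eps$, so that $\kappa_{q_\eps}=\theta_\eps'$, and set $\beta_j:=[\theta](s_j)$ for the turning at a singular point $s_j$.

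For equi-coercivity I would first note that $|q_\eps'|=1$ bounds $q_\eps$ in $H^1(\S^1;\R^2)$, so after normalising by a translation a subsequence converges in $C^0$ with $q_\eps'\wto q'$ in $L^2$; once $|q'|=1$ a.e.\ is known one has $\|q_\eps'\|_{L^2}^2=L=\|q'\|_{L^2}^2$, and weak convergence together with convergence of norms upgrades to strong $H^1$ convergence automatically. For $v_\eps$, the bound on $\int\eps v_\eps'^2+\tfrac1\eps\Phi(v_\eps)$ controls the variation of $G(v_\eps):=2\int_0^{v_\eps}\sqrt{\Phi}$ through Young's inequality, so $G(v_\eps)$ is precompact in $L^1$; combined with $\int\Phi(v_\eps)\to0$ this forces $v_\eps\to v\in\{\pm1\}$ a.e.\ and in $L^1$, with $v$ of bounded variation, and the volume constraint \eqref{eq:volume_constraint} passes to the limit. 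The number of interfaces and ``excursion'' points is finite because each crossing of a fixed level $\eta\in(0,1)$ costs a fixed amount of $G$-variation. Away from these finitely many points $s_1,\dots,s_N$ one has $|v_\eps|\ge\eta$, whence $\int\kappa_{q_\eps}^2$ is locally bounded (using $\int v_\eps^2(\kappa_{q_\eps}-C(v_\eps))^2\le M$ and boundedness of $C$), giving local $H^2$ bounds, $C^1$ convergence off the $s_j$, and $|q'|=1$; thus $q\in\C$ with $S_q\subset\{s_1,\dots,s_N\}$ and $v\in\P$.

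For the $\liminf$ inequality I would localise around $S_q\cup S_v=\{s_1,\dots,s_N\}$ with small disjoint intervals $I_j$. On the complement $v_\eps^2\to1$ strongly and $\kappa_{q_\eps}\wto\kappa_q$ in $L^2$, so $v_\eps(\kappa_{q_\eps}-C(v_\eps))\wto\kappa_q-C(v)$ and lower semicontinuity of the $L^2$ norm yields the bulk term $\int(\kappa_q-C(v))^2$. The crucial local estimate on each $I_j$ must extract $\sigma+\hat\sigma|[q'](s_j)|$ from the last two integrals at once, and the device is a spatial partition by the level sets of $v_\eps$. On the kink zone $\{|v_\eps|\le\eta\}$ I combine $\eps\kappa_{q_\eps}^2+\tfrac1\eps\Phi(v_\eps)\ge2\sqrt{\Phi(v_\eps)}\,|\kappa_{q_\eps}|$ with $\Phi(v_\eps)\approx\Phi(0)$ and with the fact that the entire turning concentrates there---off the kink zone $\int|\kappa_{q_\eps}|\to0$ as $|I_j|\to0$, so $\int_{\{|v_\eps|\le\eta\}}\kappa_{q_\eps}\to\beta_j$---obtaining $\hat\sigma|\beta_j|\ge\hat\sigma|[q'](s_j)|$. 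On the complementary interface zone I use $\eps v_\eps'^2+\tfrac1\eps\Phi(v_\eps)\ge2\sqrt{\Phi(v_\eps)}\,|v_\eps'|$ to recover $\sigma$ from the transition of $v_\eps$ between $\pm1$ and $\pm\eta$, sending $\eta\to0$ last. Since the two zones are disjoint, the potential $\tfrac1\eps\Phi(v_\eps)$ is never used twice.

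I expect the main obstacle to be precisely this simultaneous, non-double-counting extraction, compounded by the \emph{ghost interfaces}: a kink inside a phase is invisible to the $L^1$-limit of $v$, so its cost $\sigma$ cannot be read off from the variation of $v$ but must be recovered from the forced excursion of $v_\eps$ to $0$---forced because otherwise $\int\kappa_{q_\eps}^2$ would be locally bounded and no kink could form. The recovery sequence reverse-engineers this picture: near each $s_j$ I use the optimal Modica--Mortola profile for $v_\eps$ (a transition $+1\to-1$ or a dip $+1\to0\to+1$) into which I splice a short plateau $v_\eps\equiv0$ of length $\ell_j=\eps|\beta_j|/\sqrt{\Phi(0)}$ carrying all the curvature $\kappa_{q_\eps}=\beta_j/\ell_j$; balancing $\tfrac1\eps\Phi(0)\ell_j$ against $\eps\beta_j^2/\ell_j$ produces exactly $\hat\sigma|\beta_j|$, while $v_\eps\equiv0$ on the plateau annihilates the first integral there. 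Elsewhere $q_\eps$ and $v_\eps$ equal the (smoothed) limit, and the only remaining technical points---restoring the volume constraint and the closedness $\int_{\S^1}(\cos\theta_\eps,\sin\theta_\eps)=0$ by $O(\eps)$ corrections---do not affect the energy in the limit.
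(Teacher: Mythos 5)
Your proposal follows essentially the same route as the paper: Modica--Mortola compactness for $v_\eps$ plus local $H^2$ bounds for $q_\eps$ away from a finite singular set; a lower bound obtained by splitting each singular interval into the level set $\{|v_\eps|\le\eta\}$, where $\eps\kappa_{q_\eps}^2$ is paired with $\tfrac1\eps\Phi(v_\eps)$ to produce $2\sqrt{\Phi(0)}$ times the concentrated turning, and its complement, where $\eps v_\eps'^2$ is paired with $\tfrac1\eps\Phi(v_\eps)$ to produce $\sigma$; and a recovery sequence with a plateau $v_\eps\equiv0$ of length $\eps|[q']|/\sqrt{\Phi(0)}$ carrying a linear angle interpolation, spliced into the optimal profile. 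This matches the paper's decomposition, including the disjointness argument that prevents double-counting of the potential and the treatment of ghost interfaces via the forced excursion of $v_\eps$ to zero. Two points of comparison. First, your upgrade to strong $H^1$ convergence via $\|q_\eps'\|_{L^2}^2=L=\|q'\|_{L^2}^2$ together with weak convergence is a valid and slightly more elementary alternative to the paper's route through global angle functions bounded in $W^{1,1}$; note, though, that the paper also uses those angle functions (equivalently, your Cauchy--Schwarz bound on $\int|\kappa_{q_\eps}|$ off the kink zone) to justify that the turning concentrates on $\{|v_\eps|\le\eta\}$, so that estimate cannot be skipped. Second, the step you dismiss as a technical $O(\eps)$ correction --- restoring closedness of the curve after modifying the angle on $I^s_\eps$ --- is the least routine part of the paper's upper bound: since $|q_\eps'|=1$ must be preserved, the endpoint defect cannot be removed by translation, and the paper perturbs the angle by $\alpha_\eps f+\beta_\eps g$ and applies the implicit function theorem, which requires showing the linearised constraint map built from $\int_I f\sin u\,dt$ and $\int_I g\cos u\,dt$ is non-singular, with a separate argument in the degenerate case $u(t)=\tfrac{\pi}{2}\sign t$. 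Your claim is correct in outcome, but the existence of an energy-negligible admissible correction is precisely what has to be proved there.
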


We postpone the proof of Theorem \ref{main_theorem} to Section 2 in
favour of some remarks and illustrations. Two examples of a local
minimiser of $\E_\eps$ are given in Figure \ref{fig:result1} for
$\eps=0.05$ and $C(v)$ being a cubic interpolation of $C(-1)=1$,
$C(+1)=2$, $C'(\pm1)=0$; the potential, and therewith the cost of
kinks and interfaces, is $\Phi(v)=(1-v^2)^2$ for the left pictures and
$\Phi(v)=0.75(1-v^2)^2$ for the right. Both results are obtained by a
gradient flow for $\E_\eps$ with respect to the $H^{-1}$ norm for the
phase field and the $L^2$ norm for the tangent angle; see
\cite{H09,EFM89,EF87} for details.

In both simulations the initial curve is a circle of radius $2$ and
the initial phase field has mean value zero with two interface
regions. The interfaces are retained during the evolution, but new
small areas of large curvature appear within the phase of spontaneous
curvature $2$. As already mentioned, these additional regions may
persist as $\eps$ tends to zero, giving rise to ghost interfaces.
Between (ghost) interfaces the numerically stationary curve consists
of segments of circles whose curvatures are determined by the phase,
but not equal to the preferred ones.

Our second note is the existence of minimisers for $\E$. From the
properties of $\Gamma$-convergence and equi-coercivity, see for
instance \cite{B02}, we know that any sequence
$(q_\eps,v_\eps) \in \C_\eps \times \P_\eps$ satisfying
\begin{equation*}
  \E_\eps(q_\eps,v_\eps) = \inf_{\C_\eps \times \P_\eps}
  \E_\eps + o(1),
\end{equation*}
admits a subsequence converging to a minimiser $(q,v)$ of $\E$ in $\C
\times \P$. As the energy \eqref{eq:eps-energy} is bounded from below,
we can always find such almost minimising sequences.  By the Direct
Method of the Calculus of Variations there exists even a minimiser for
each $\eps>0$, because $\E_\eps$ is coercive and weakly lower
semi-continuous on $H^2(\S^1;\R^2) \times H^1(\S^1)$, and $\C_\eps
\times \P_\eps$ is nonempty and weakly closed.

Finally, let us discuss three extensions of our theorem.  First of
all, the proof is easily adapted to non-symmetric potentials
$\Phi$. In this case one splits $\sigma$ into two constants
$\sigma^\pm$, defined as the integral of $\Phi$ over $(0,1)$ and
$(-1,0)$, and distinguishes proper interfaces and ghost interfaces in
different phases by their constant energy contribution $\sigma^+ +
\sigma^-$, $2\sigma^+$ or $2\sigma^-$. One may also consider
potentials like $\Phi(v) = (1-v)^2$ and drop the volume constraint for
$v_\eps$. Then there is only one material phase, and the $v_\eps$ are
mere auxiliary variables to allow curvature induced kinks.

Second, changing the power of $\eps$ in the last term of
\eqref{eq:eps-energy} to $\eps^k$, $k>1$, or even dropping the term
completely yields the $\Gamma$-limit \eqref{eq:energy} with
$\hat\sigma=0$, that is, without bending contribution of kinks; the
underlying topology changes to weak $H^1$ convergence of the curves.

Third, the arguments can be extended to handle non-closed curves with
fixed end points and prescribed tangents in the approximate model. The
additional issue in this situation is that kinks may appear at the
boundary in the sense that the tangent vector of the limit curve
differs from the prescribed one. As for ghost interfaces this yields a
contribution to the limit energy; see \cite{H09} for the details.

\begin{figure}
  \centering
  \includegraphics[width=\textwidth]{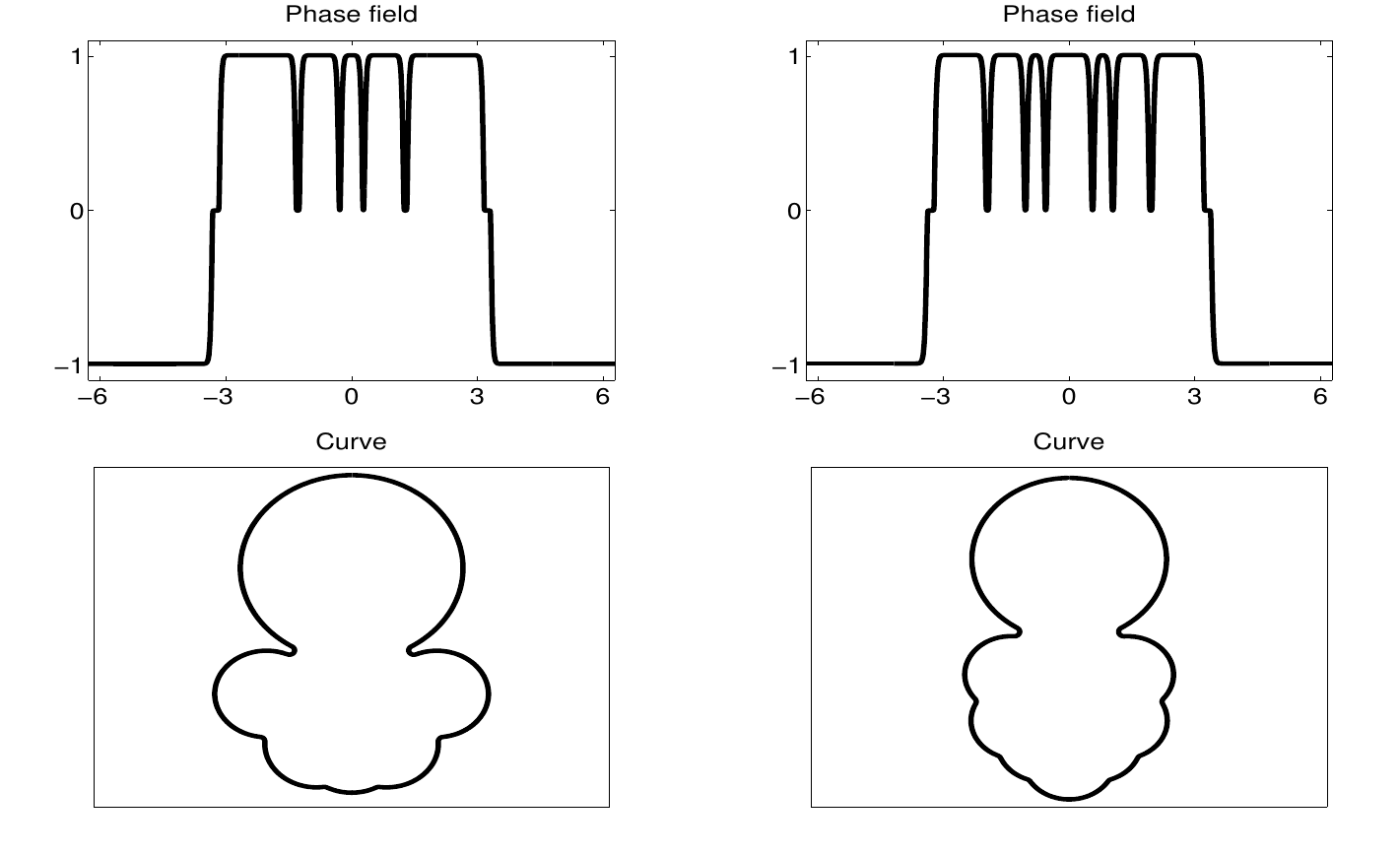}
  \caption{Two examples of numerically local minimisers of $\E_\eps$.
    In the upper figures the phase fields are plotted over the sphere
    parametrised by the interval $[-2\pi,2\pi]$; the lower figures
    show the curves in the $xy$-plane.}
  \label{fig:result1}
\end{figure}


\section{Proof of Theorem \ref{main_theorem}}

In this section we present the proof of Theorem \ref{main_theorem}. We
first show equi-coercivity of the energies $\E_\eps$, then establish
the lower bound inequality and close with the upper bound.  An
important ingredient in what follows is the fact that given
$q\in\C_\eps$ and a directed line in the plane there is $u \in
H^1(\S^1)$ such that $u(t)$ is the angle between $q'(t)$ and this
line; $u$ is uniquely determined up to adding multiples of $2\pi$, and
in addition we have $\kappa_q(t) = u'(t)$ for all $t \in \S^1$.  On
the other hand, the curve is uniquely determined by fixing one point
together with its tangent there and an angle function.  For $q \in \C$
we can still find an angle function $u \in H^1(\S^1 \sm S_q)$, but as
jumps can be arbitrarily large it is not unique anymore; of course we
can assume that each jump is less than $2\pi$.  If we are only
interested in finding a local angle function near a kink, its jump can
be bounded by the enclosed angle of the limit tangents if the line is
chosen appropriately.  In such a local setting $|[q']|$ is simply
given by $|[u]|$.


\subsection{Equi-coercivity}

\begin{lemma}
  Let $(q_\eps,v_\eps) \in \C_\eps \times \P_\eps$ be a sequence
  with uniformly bounded energy $\E_\eps(q_\eps,v_\eps)$.  Then there
  are $(q,v) \in \C \times \P$ and a subsequence,
  not relabelled, such that $q_\eps \to q$ in $H^1(\S^1;\R^2)$ and
  $v_\eps \to v$ in $L^1(\S^1)$.

  Furthermore, for this subsequence there are global angle functions
  $u_\eps \in H^1(\S^1)$ that converge weakly in $BV(\S^1)$ to an
  angle function $u$ of $q$.
\end{lemma}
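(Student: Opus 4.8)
The plan is to treat the phase field and the curve by two different compactness mechanisms and then to upgrade the resulting weak limits to an admissible pair in $\C\times\P$. First I would extract the limit phase. From the energy bound $\E_\eps(q_\eps,v_\eps)\le M$ the Modica--Mortola part $\int_{\S^1}\eps v_\eps'^2+\tfrac1\eps\Phi(v_\eps)$ is bounded. Since $\Phi(v)\to\infty$ as $v\to\pm\infty$, this forces a uniform $L^\infty$-bound on $v_\eps$; writing $G(v)=2\int_0^v\sqrt{\Phi}$ and using $2|v_\eps'|\sqrt{\Phi(v_\eps)}\le\eps v_\eps'^2+\tfrac1\eps\Phi(v_\eps)$, the functions $G\circ v_\eps$ are bounded in $BV(\S^1)$, hence precompact in $L^1$. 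Inverting the homeomorphism $G$ and using $\int\Phi(v_\eps)\le M\eps\to0$ yields a subsequence $v_\eps\to v$ in $L^1$ with $\Phi(v)=0$ a.e., so $v\in BV(\S^1;\{\pm1\})$; the volume constraint passes to the limit, giving $v\in\P$ with finite jump set $S_v$.

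Next I would produce the global angle functions and bound their variation. For $q_\eps\in\C_\eps$ choose $u_\eps\in H^1(\S^1)$ with $q_\eps'=(\cos u_\eps,\sin u_\eps)$ and $\kappa_{q_\eps}=u_\eps'$, and, using translation invariance of $\E_\eps$, assume $q_\eps(0)=0$ with $u_\eps(0)$ bounded. Split $\S^1$ into the good set $A_\eps=\{v_\eps^2\ge\tfrac12\}$ and the bad set $B_\eps=\{v_\eps^2<\tfrac12\}$. On $A_\eps$ the first term of $\E_\eps$ controls $\|\kappa_{q_\eps}-C(v_\eps)\|_{L^2(A_\eps)}$, and boundedness of $C$ gives a uniform $L^2(A_\eps)$-bound on $\kappa_{q_\eps}$, hence on $\int_{A_\eps}|\kappa_{q_\eps}|$. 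On $B_\eps$ one has $|v_\eps|<1/\sqrt2$, so $\Phi(v_\eps)\ge c_0:=\min_{|s|\le 1/\sqrt2}\Phi(s)>0$ and the potential term forces $|B_\eps|\le M\eps/c_0$; combined with $\eps\int_{\S^1}\kappa_{q_\eps}^2\le M$ and Cauchy--Schwarz this bounds $\int_{B_\eps}|\kappa_{q_\eps}|\le\sqrt{|B_\eps|}\,\sqrt{M/\eps}$ uniformly. Thus $\int_{\S^1}|u_\eps'|=\int_{\S^1}|\kappa_{q_\eps}|$ is bounded, so $u_\eps$ is bounded in $BV(\S^1)$ and, along a further subsequence, $u_\eps\to u$ weakly in $BV$ and a.e. Dominated convergence then gives $q_\eps'=(\cos u_\eps,\sin u_\eps)\to(\cos u,\sin u)$ strongly in $L^2$, and with $q_\eps(0)=0$ this yields $q_\eps\to q$ strongly in $H^1(\S^1;\R^2)$, $q'=(\cos u,\sin u)$, with $u$ an angle function of $q$.

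Finally I would upgrade $u$ to an angle function of a curve in $\C$. Writing $\kappa_{q_\eps}=\kappa_{q_\eps}\chi_{A_\eps}+\kappa_{q_\eps}\chi_{B_\eps}$, the first part is bounded in $L^2$ and converges weakly to some $g\in L^2$, while the second is a bounded measure supported on $B_\eps$, whose Lebesgue measure tends to zero; hence in the limit $u'=g\,dt+\nu$ with $g\in L^2$ and $\nu$ singular. It remains to see that $\nu$ is a finite sum of atoms. Its support can only lie where $B_\eps$ concentrates, that is at interfaces of $v$ and at \emph{ghost interfaces}. Proper interfaces are finite in number since $v\in\P$. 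For ghost interfaces the key observation is that a jump of $u$ of size $\alpha>0$ forces $\kappa_{q_\eps}$ to accumulate mass $\approx\alpha$ on a shrinking set; were $v_\eps$ bounded away from $0$ there, the first term of $\E_\eps$ would blow up by Cauchy--Schwarz, so $v_\eps$ must dip to near $0$, and each such dip costs at least $\sigma$ in the Modica--Mortola term. The energy bound therefore permits only finitely many ghost interfaces, so $S_q:=\operatorname{supp}\nu$ is finite, $\nu=\sum_j\alpha_j\delta_{s_j}$ carries no Cantor part, and $u\in H^1(\S^1\sm S_q)$ with $u'=g\in L^2$ off $S_q$; consequently $q\in H^2(\S^1\sm S_q;\R^2)$ with $|q'|=1$, i.e.\ $q\in\C$.

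The delicate point, and the main obstacle, is precisely this last step: ruling out a Cantor part and bounding the number of ghost interfaces. The uniform $BV$-bound on $u_\eps$ alone only delivers a limit in $BV$, which need not correspond to a curve in $\C$; the improvement to finitely many kinks and an $L^2$ absolutely continuous part relies on the good/bad decomposition together with the fact that concentration of curvature at a kink is energetically tied, through the $\eps\int\kappa_{q_\eps}^2$ and the potential term, to a definite amount of Modica--Mortola energy.
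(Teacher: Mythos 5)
Your proposal is correct and follows essentially the same route as the paper: Modica--Mortola compactness for the phase field, a good/bad-set splitting of $\S^1$ to bound $\int_{\S^1}|\kappa_{q_\eps}|\,dt$ and hence the global angle functions in $BV$, strong $H^1$ convergence of the curves via $q_\eps'=(\cos u_\eps,\sin u_\eps)$, and finiteness of the singular set through the quantized Modica--Mortola cost of each dip of $v_\eps$. The only real difference is organisational: the paper extracts the finite exceptional set $\tilde S$ directly from the one-dimensional Modica--Mortola compactness and then proves $q\in H^2(\S^1\sm\tilde S;\R^2)$ by local weak $L^2$-compactness of $\kappa_{q_\eps}$ on intervals where $v_\eps^2\ge 1/4$, which sidesteps your unneeded (and not fully justified --- vanishing Lebesgue measure of the supports $B_\eps$ does not by itself force a singular weak-$*$ limit) claim that the limit of $\kappa_{q_\eps}\chi_{B_\eps}\,dt$ is purely singular.
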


\begin{proof}
  The argument for the sequence of phase fields is based on well-known
  observations by Modica and Mortola,\cite{M87,MM77} see in particular
  \cite{B02} for a proof in one dimension.  The outcome is a
  finite set of points $\tilde S \subset \S^1$ and a function $v \in
  \P$ whose jump set $S_v$ is contained in $\tilde S$ such that a
  subsequence $v_\eps$ converges to $v$ in measure and pointwise on
  $\S^1 \sm \tilde S$. Moreover, in the one-dimensional setting
  $(v_\eps)$ is uniformly bounded in $L^\infty(\S^1)$, hence the
  subsequence converges in $L^p(\S^1)$ for any $p<\infty$; we also
  have $|v_\eps| \geq 1/2$ for sufficiently small $\eps$ in any
  interval compactly contained in $\S^1 \sm \tilde S$.

  Taking into account only the just selected subsequence, we address
  the curves. As translations and rotations do not change the energy,
  we may assume that all curves pass at a fixed $s_0\in\S^1$ through a
  common point with the same tangent vector $\tau_0$.  From this and
  the fact that $|q_\eps'|=1$ we get $q_\eps \wto q$ in
  $H^1(\S^1;\R^2)$ for a subsequence.
  To show that $q \in H^2(\S^1\sm \tilde S)$ let $I$ be open and
  compactly contained in $\S^1 \sm \tilde S$. As $v_\eps^2 \geq 1/4$
  in $I$ for sufficiently small $\eps$, the sequence
  $(\kappa_{q_\eps}^2)=(|q''_\eps|^2)$ is bounded in $L^1(I)$; thus a
  subsequence of $(q''_\eps)$ converges weakly in $L^2(I;\R^2)$ to
  some $q''_I$. But then $q_\eps$ converges weakly in $H^2(I;\R^2)$
  and from uniqueness of the weak limit we infer that $q''_I$ is the
  weak derivative of $q'$ in $I$ and that the whole sequence
  converges.  This convergence combined with $|v_\eps|\to1$ and
  $\sup_\eps \|v_\eps\|_{\infty} < \infty$ yields the estimate
  \begin{equation}
    \label{eq:liminf_compactness}
    \int_I |q''_I|^2 \,dt
    \leq \liminf_{\eps \to 0}
    \int_I v_\eps^2 |q''_\eps|^2 \,dt
    \leq \liminf_{\eps \to 0}
    \int_{\S^1} v_\eps^2 |q''_\eps|^2 \,dt,
  \end{equation}
  where the right hand side is bounded by the energy.  Since
  \eqref{eq:liminf_compactness} is true for any $I \Subset\S^1 \sm
  \tilde S$, $q''$, defined as $q''_I$ on $I \Subset \S^1
  \sm \tilde S$, belongs to $L^2(\S^1;\R^2)$, and $q \in H^2(\S^1 \sm
  \tilde S; \R^2)$ follows.

  It remains to establish convergence of angle functions and to
  improve the convergence of the curves. Applying H\"older's
  inequality we get
  \begin{align*}
    \int_{\S^1} |\kappa_{q_\eps}| \,dt & \leq \int_{\{|v_\eps|<1/2\}}
    |\kappa_{q_\eps}| \,dt + \int_{\{|v_\eps|\geq1/2\}}
    |\kappa_{q_\eps}| \,dt
    \\
    & \leq \left( \int_{\S^1} \eps \kappa_{q_\eps}^2 \,dt
    \right)^{1/2} \left( \tfrac{1}{\eps} |\{|v_\eps|<1/2\}|
    \right)^{1/2} + 2 \sqrt{L}
    \left( \int_{\S^1} v_\eps^2\kappa_{q_\eps}^2 \,dt \right)^{1/2}.
  \end{align*}
  Here the curvature integrals are bounded by $\E(q_\eps,v_\eps)$, and
  since $\Phi$ has a positive minimum on $[-1/2,1/2]$, the quantity
  $|\{|v_\eps|<1/2\}|/\eps$ is controlled by the potential energy.
  Hence, with $\bar u$ satisfying $(\cos \bar u, \sin \bar u) =
  \tau_0$, the global angle functions
  \begin{equation*}
    u_\eps(s) = \bar u + \int_{s_0}^s \kappa_{q_\eps}(t) \,dt
  \end{equation*}
  are uniformly bounded in $L^\infty(\S^1)$ and
  $W^{1,1}(\S^1\sm\{s_0\})$.  Therefore there is a subsequence such
  that $u_\eps \to u$ almost everywhere and weakly in $BV(\S^1)$, that
  is $u_\eps \to u$ in $L^1(\S^1)$ and $\kappa_{q_\eps} dt$ weakly to
  the measure $Du$. Consequently, $q'_\eps = (\cos u_\eps, \sin
  u_\eps) \to (\cos u, \sin u) = q'$ in $L^2(\S^1;\R^2)$ and $q_\eps
  \to q$ in $H^1(\S^1;\R^2)$.
\end{proof}


\subsection{Lower bound inequality}

Next we prove the lower bound inequality
\begin{equation*}
  \liminf_{\eps\to0} \E_\eps(q_\eps,v_\eps) \geq \E(q,v)
\end{equation*}
whenever $(q_\eps,v_\eps)$ converges to $(q,v)$ in $H^1(\S^1;\R^2)
\times L^1(\S^1)$. It suffices to examine the case when
$(q_\eps,v_\eps) \in \C_\eps \times \P_\eps$ and to consider a
subsequence such that the lower limit is attained. Then our
compactness argument shows that $(q,v) \in \C \times \P$ and $S_q \cup
S_v \subset \tilde S$, where $\tilde S \subset \S^1$ is a finite set
of points. The same arguments as in \eqref{eq:liminf_compactness} and
the convergence $C(v_\eps) \to C(v)$ in $L^2(\S^1)$ yield
\begin{equation}
  \label{eq:liminf_curvature}
  \int_{\S^1 \sm (S_q \cup S_v)} \left( \kappa_q - C(v) \right)^2 \,dt
  \leq
  \liminf_{\eps\to0} \int_{\S^1} v_\eps^2 \left( \kappa_{q_\eps} - C(v_\eps)
    \right)^2 \,dt.
\end{equation}
As points in $\tilde S \sm (S_q \cup S_v)$ do not contribute to the
limit energy, the task is to understand what happens near kinks and
interfaces.  To this end it is convenient to introduce the
set-dependent energies
\begin{align*}
  \F_\eps(q_\eps,v_\eps,I) &= \int_I \eps v_\eps'^2
  + \tfrac{1}{\eps} \Phi(v_\eps) + \eps \kappa_{q_\eps}^2 \,dt
\\
\intertext{and}
  \F(q,v,I) &= \sum_{s \in (S_q \cup S_v) \cap I} \left(
    \sigma + \hat\sigma |[q'](s)| \right)
\end{align*}
for $I \subset \S^1$.  In what follows we establish estimates for
$\F_\eps$ and $\F$ in the case of an interface without a kink, extend
the argument to interfaces with a kink, and afterwards deal with ghost
interfaces.  The inequality for $\E_\eps$ and $\E$ then follows by
combining these estimates with \eqref{eq:liminf_curvature}.


\subsubsection{Interfaces without kink: $s \in S_v \sm S_q$}

Let $I$ be an open interval in $\S^1$ such that $\bar I \cap \tilde S
= \{s\}$. As the curve $q$ has no kink in $I$, it does not contribute
to the limit energy $\F(q,v,I)$, so we estimate the curvature term of
$\F_\eps(q_\eps,v_\eps,I)$ simply by zero.  The lower bound of the
remaining part
\begin{equation}
  \label{ModicaMortola}
  \liminf_{\eps\to0}
  \int_I \eps v'^2_\eps + \tfrac{1}{\eps} \Phi(v_\eps) \,dt
  \geq
  \sigma
\end{equation}
is the well-known result for phase transitions by Modica and
Mortola.\cite{MM77,M87} In fact, there are points $a_\eps,b_\eps \in
I$, $a_\eps<s<b_\eps$ or $b_\eps<s<a_\eps$ such that $v_\eps(a_\eps)
\to -1$ and $v_\eps(b_\eps) \to 1$ for a subsequence $\eps\to0$;
restricting the integral to $(a_\eps,b_\eps)$ or $(b_\eps,a_\eps)$,
inequality \eqref{ModicaMortola} follows from Young's inequality and a
substitution of variables.


\subsubsection{Interfaces with kink: $s \in S_v \cap S_q$}

Now let $s$ be a point where the curve $q$ has a kink and fix
local angle functions $u_\eps$ in a small interval $I$ around $s$ such
that $\bar I$ contains no other point of $\tilde S$.  By our
equi-coercivity result we may assume that $(u_\eps)$ converges weakly
in $BV(I)$ to an angle function $u$ of $q$; in particular, we have
\begin{equation*}
  \int_I u_\eps' \,dt \to [u](s) + \int_I \kappa_q \,dt.
\end{equation*}
Note that $|[u](s)| \geq |[q'](s)|$ with strict inequality possible if
the curves $q_\eps$ have loops near $s$ that vanish in the limit.

We split the neighbourhood $I$ of $s$ into two parts: one where
$v_\eps$ is close to zero and the other where its transition to $\pm1$
takes place; we expect $q_\eps$ to approximate the kink in the former
part.

\begin{lemma}
  \label{M_eps_delta}
  For $I$, $u$ as above and any sufficiently small $\delta>0$ let
  \begin{equation*}
    M_{\eps,\delta} = \{ t \in \S^1 : |v_\eps(t)| \leq \delta \}
  \end{equation*}
  be the set where $|v_\eps|$ is bounded by $\delta$. Then
  \begin{equation*}
    \liminf_{\eps\to0} \left|
      \int_{I \cap M_{\eps,\delta}} u_\eps' \,dt \right|
    \geq
    |[u](s)|.
  \end{equation*}
\end{lemma}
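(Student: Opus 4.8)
The plan is to treat the derivatives as measures and to track where the jump of the limit angle function concentrates. Write $\mu_\eps = u_\eps'\,dt = \kappa_{q_\eps}\,dt$ as a sequence of signed measures on $I$. By the equi-coercivity lemma the $u_\eps$ are uniformly bounded in $W^{1,1}$, so the $\mu_\eps$ have uniformly bounded total variation, and the weak $BV$-convergence $u_\eps \to u$ means $\mu_\eps \wsto Du$ as measures, where $Du = \kappa_q\,dt + [u](s)\,\delta_s$ splits into an absolutely continuous part and an atom at $s$. The goal is to show that this atom is carried entirely by the part of $\mu_\eps$ living on $M_{\eps,\delta}$. Accordingly, I would split $\mu_\eps = \mu_\eps\lfloor M_{\eps,\delta} + \mu_\eps\lfloor(I\sm M_{\eps,\delta})$; both pieces have uniformly bounded total variation, so after passing to a subsequence I may assume $\mu_\eps\lfloor M_{\eps,\delta}\wsto\lambda^1$ and $\mu_\eps\lfloor(I\sm M_{\eps,\delta})\wsto\lambda^2$ with $\lambda^1+\lambda^2 = Du$.

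First I would control the complement part. On $I\sm M_{\eps,\delta}$ we have $v_\eps^2 > \delta^2$, hence
\[
  \int_{I\sm M_{\eps,\delta}} \kappa_{q_\eps}^2\,dt
  \le \tfrac{1}{\delta^2}\int_I v_\eps^2\kappa_{q_\eps}^2\,dt,
\]
and the right-hand side is bounded uniformly in $\eps$, since the first term of the energy controls $\int_I v_\eps^2(\kappa_{q_\eps}-C(v_\eps))^2$ while $C$ and $\|v_\eps\|_\infty$ are bounded. Thus the densities of $\mu_\eps\lfloor(I\sm M_{\eps,\delta})$ are bounded in $L^2(I)$ (with a bound depending on $\delta$), so $\lambda^2$ is absolutely continuous and carries no atom.

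Next I would locate the support of $\lambda^1$. Since $s$ is the only point of $\tilde S$ in $\bar I$ and $|v_\eps|\ge 1/2 > \delta$ eventually on every interval compactly contained in $I\sm\{s\}$, the sets $M_{\eps,\delta}\cap I$ shrink to $\{s\}$; consequently $\mu_\eps\lfloor M_{\eps,\delta}$ vanishes against any continuous test function supported away from $s$, so $\lambda^1 = c\,\delta_s$ for some $c\in\R$. Matching the atomic and absolutely continuous parts in $\lambda^1+\lambda^2 = \kappa_q\,dt + [u](s)\,\delta_s$ then forces $c = [u](s)$ and $\lambda^2 = \kappa_q\,dt$. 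Testing $\mu_\eps\lfloor M_{\eps,\delta}$ against a continuous cut-off equal to one in a neighbourhood of $s$, and using that $M_{\eps,\delta}\cap I$ eventually lies inside that neighbourhood, gives $\int_{I\cap M_{\eps,\delta}} u_\eps'\,dt \to [u](s)$ along the subsequence. As the limit is independent of the subsequence, the full sequence converges, and the claimed inequality (indeed an equality) follows after taking absolute values.

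The main obstacle is the clean separation in the last two steps: one must be sure that the atom of $Du$ cannot be produced by the part of $\mu_\eps$ on the complement $I\sm M_{\eps,\delta}$, and that the part on $M_{\eps,\delta}$ cannot generate spurious absolutely continuous mass or atoms away from $s$. The uniform $L^2$-bound on the complement rules out atoms there, while the concentration of $M_{\eps,\delta}$ at $s$ pins the support of $\lambda^1$; together they make the decomposition of the limit measure match the decomposition of $\mu_\eps$, which is exactly what converts the weak convergence of measures into the desired quantitative statement.
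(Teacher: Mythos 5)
Your argument is correct, and it reaches the conclusion by a genuinely different route from the paper's. The paper works directly with the integrals: writing $w_\eps = u_\eps' - \kappa_q$, it splits $\left|\int_{I\sm M_{\eps,\delta}} w_\eps\,dt\right|$ into a contribution on $I\sm U_\gamma$ (which vanishes by weak-$*$ convergence of $u_\eps'\,dt$ to $Du$ away from $s$) and one on $(I\sm M_{\eps,\delta})\cap U_\gamma$ (which is $O(\sqrt{\gamma})$ by H\"older's inequality and the $\delta$-dependent $L^2$ bound on $\kappa_{q_\eps}$ off $M_{\eps,\delta}$), and then concludes with the reverse triangle inequality. You instead decompose the measures $u_\eps'\,dt$ into their restrictions to $M_{\eps,\delta}$ and its complement, extract weak-$*$ limits $\lambda^1,\lambda^2$, and identify them through the Lebesgue decomposition of $Du$: the $L^2$ bound forces $\lambda^2$ to be absolutely continuous, the shrinking of $I\cap M_{\eps,\delta}$ to $\{s\}$ forces $\lambda^1$ to be a multiple of the Dirac mass at $s$, and matching the atomic parts gives $\lambda^1=[u](s)\,\delta_s$. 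Both proofs rest on exactly the same two facts --- the uniform $L^2$ bound for $\kappa_{q_\eps}$ on $\{|v_\eps|\ge\delta\}$ coming from the first term of the energy, and the eventual inclusion $I\cap M_{\eps,\delta}\subset U_\gamma$ --- but yours is softer: it avoids the two-parameter $\gamma$--$\eps$ bookkeeping and cleanly delivers the stronger statement that $\int_{I\cap M_{\eps,\delta}}u_\eps'\,dt$ actually converges to $[u](s)$ (the paper's computation also yields this, though only the inequality is stated). The price is the subsequence extraction, together with the minor point that the identity $\lambda^1+\lambda^2=Du$ should be read against test functions in $C_c(I)$ so that no mass escaping to $\partial I$ is counted; this is harmless here because the atom sits at the interior point $s$ and $M_{\eps,\delta}$ is eventually contained in a compact neighbourhood of $s$.
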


\begin{proof}
  Let $\gamma>0$ be arbitrary but fixed, and let
  $U_\gamma:=[s-\gamma,s+\gamma]$. As $I \sm U_\gamma$ is compactly
  contained in $\S^1 \sm \tilde S$ we have $|v_\eps|\geq2\delta$ in $I
  \sm U_\gamma$ for all sufficiently small $\eps$, and therefore $I
  \cap M_{\eps,\delta} \subset U_\gamma$.  Writing
  $w_\eps=u'_\eps-\kappa_q$,
  we have
  \begin{equation}
    \label{eq:bd}
    \left| \int_{I \sm M_{\eps,\delta}} w_\eps \,dt \right|
    \leq
    \left| \int_{I \sm U_\gamma} w_\eps \,dt \right|
    + \int_{(I \sm M_{\eps,\delta}) \cap U_\gamma} |w_\eps| \,dt
  \end{equation}
  for all sufficiently small $\eps$.  The first term in \eqref{eq:bd}
  converges to zero as $\eps\to0$ by weak convergence of $w_\eps dt$
  in $I \sm U_\gamma$, and the second is less than a constant times
  $\sqrt{\gamma}$ due to H\"older's inequality and the energy bound.
  As $\gamma>0$ is arbitrary we obtain
  \begin{equation*}
    \limsup_{\eps\to0} \left| \int_{I \sm M_{\eps,\delta}} w_\eps \,dt \right|
    = 0,
  \end{equation*}
  and taking the lower limit in the inequality
  \begin{equation*}
    \left| \int_{I \cap M_{\eps,\delta}} w_\eps \,dt \right|
    \geq
    \left| \int_{I} w_\eps \,dt \right|
    - \left|\int_{I \sm M_{\eps,\delta}} w_\eps \,dt \right|
  \end{equation*}
  yields the claim as $\kappa_q\in L^2(I)$ and $|I \cap
  M_{\eps,\delta}|\to0$.
\end{proof}

Next we prove the key estimate for the lower bound inequality at kinks.

\begin{lemma}
  \label{JumpArgument}
  Let $I \subset \S^1$ be an open interval such that $\bar I$ contains
  exactly one point $s \in S_v \cap S_q$ and no other points of
  $\tilde S$. Then
  \begin{equation*}
    \liminf_{\eps\to0} \F_\eps(q_\eps,v_\eps,I)
    \geq
    \hat\sigma |[q'](s)|  + \sigma.
  \end{equation*}
\end{lemma}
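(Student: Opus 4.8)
The plan is to account for the interface cost $\sigma$ and the bending cost $\hat\sigma|[q'](s)|$ through \emph{two disjoint regions} of the neighbourhood $I$: the set $M_{\eps,\delta}=\{|v_\eps|\le\delta\}$, on which $v_\eps$ lingers near the unstable value $0$ and where $q_\eps$ performs the kink, and its complement $I\sm M_{\eps,\delta}$, on which the phase field runs from near $-1$ to near $+1$. The crucial observation is that, since these two sets are disjoint, the single potential integral $\tfrac1\eps\int_I\Phi(v_\eps)$ splits as a \emph{sum} over them, so I may spend the potential once on the bending estimate and once on the Modica--Mortola estimate without double counting. Concretely, dropping the nonnegative gradient term on $M_{\eps,\delta}$ and the nonnegative bending term on its complement,
\begin{equation*}
  \F_\eps(q_\eps,v_\eps,I)
  \geq
  \int_{I\cap M_{\eps,\delta}}\!\Big(\eps\kappa_{q_\eps}^2+\tfrac1\eps\Phi(v_\eps)\Big)\,dt
  + \int_{I\sm M_{\eps,\delta}}\!\Big(\eps v_\eps'^2+\tfrac1\eps\Phi(v_\eps)\Big)\,dt.
\end{equation*}

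For the first (kink) integral I would use Young's inequality $\eps a^2+\tfrac1\eps c\ge2\sqrt c\,|a|$ with $a=\kappa_{q_\eps}$ and $c=\Phi(v_\eps)$, together with $\Phi(v_\eps)\ge\Phi_\delta:=\min_{|v|\le\delta}\Phi$, to obtain
\begin{equation*}
  \int_{I\cap M_{\eps,\delta}}\!\Big(\eps\kappa_{q_\eps}^2+\tfrac1\eps\Phi(v_\eps)\Big)\,dt
  \geq 2\sqrt{\Phi_\delta}\,\Big|\int_{I\cap M_{\eps,\delta}}\kappa_{q_\eps}\,dt\Big|.
\end{equation*}
Since $\kappa_{q_\eps}=u_\eps'$, Lemma~\ref{M_eps_delta} gives $\liminf_\eps|\int_{I\cap M_{\eps,\delta}}\kappa_{q_\eps}\,dt|\ge|[u](s)|\ge|[q'](s)|$, so the lower limit of the kink integral is at least $2\sqrt{\Phi_\delta}\,|[q'](s)|$. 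For the second (interface) integral I would run the standard Modica--Mortola argument on $I\sm M_{\eps,\delta}$, using again that there are points near $s$ where $v_\eps\to-1$ and $v_\eps\to+1$. Restricting to the subintervals on which $v_\eps$ runs between these bulk values and the first crossings of $\mp\delta$ (so that $|v_\eps|>\delta$ there, placing them in the complement), Young's inequality and the substitution $v=v_\eps(t)$ bound the lower limit below by $2\int_{-1}^{-\delta}\sqrt\Phi+2\int_\delta^1\sqrt\Phi=\sigma-2\int_{-\delta}^{\delta}\sqrt\Phi$.

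Adding the two contributions and letting $\delta\to0$---so that $\Phi_\delta\to\Phi(0)$ and $\int_{-\delta}^\delta\sqrt\Phi\to0$---yields
\begin{equation*}
  \liminf_{\eps\to0}\F_\eps(q_\eps,v_\eps,I)\ge 2\sqrt{\Phi(0)}\,|[q'](s)|+\sigma=\hat\sigma|[q'](s)|+\sigma,
\end{equation*}
as prescribed by the constants \eqref{eq:constants}.

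The conceptual heart, and the step I expect to require the most care, is the bookkeeping that lets the \emph{same} potential term pay for both costs: the phase field must sit near the well-top $v=0$ precisely where the curvature concentrates to form the kink, which is exactly why the bending constant $\hat\sigma=2\sqrt{\Phi(0)}$ carries the barrier height $\Phi(0)$, while the Modica--Mortola cost forfeited in the thin band $[-\delta,\delta]$ is of order $\delta$ and hence vanishes in the limit. The remaining technical obstacle is to make the Modica--Mortola estimate on $I\sm M_{\eps,\delta}$ rigorous when $v_\eps$ is not monotone, i.e.\ to select the correct transition subintervals and to take the limits in the right order, $\eps\to0$ before $\delta\to0$.
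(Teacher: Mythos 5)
Your proposal is correct and follows essentially the same route as the paper: the same splitting of $I$ into $M_{\eps,\delta}$ and its complement, the same use of Lemma~\ref{M_eps_delta} together with $|[u](s)|\ge|[q'](s)|$ for the kink part, the same truncated Modica--Mortola estimate on the complement, and the limit $\eps\to0$ before $\delta\to0$. The only cosmetic difference is that you apply Young's inequality pointwise on $M_{\eps,\delta}$ where the paper first applies H\"older's inequality to $\int u_\eps'$ and then Young's inequality to the resulting two terms; both yield the same bound $2\sqrt{\inf_{[-\delta,\delta]}\Phi}\,\bigl|\int_{I\cap M_{\eps,\delta}}u_\eps'\,dt\bigr|$.
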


\begin{proof}
  With the notation of Lemma \ref{M_eps_delta} we have
  \begin{equation}
    \label{eq:first_estimate}
    \F_\eps(q_\eps,v_\eps,I)
    \geq \int_{I \cap M_{\eps,\delta}} \eps u'^2_\eps
    + \tfrac{1}{\eps} \Phi(v_\eps) \,dt
    + \int_{I \sm M_{\eps,\delta}} \eps v'^2_\eps
    + \tfrac{1}{\eps} \Phi(v_\eps) \,dt.
  \end{equation}
  Estimating the first term with H\"older's and Young's inequality
  we get
  \begin{align*}
    \int_{I \cap M_{\eps,\delta}} \eps u'^2_\eps
    + \tfrac{1}{\eps} \Phi(v_\eps) \,dt
    & \geq \tfrac{\eps}{|I \cap M_{\eps,\delta}|}
    \left( \int_{I \cap M_{\eps,\delta}} u'_\eps \,dt\right)^2
    + \tfrac{|I \cap M_{\eps,\delta}|}{\eps}
    \inf_{v\in[-\delta,\delta]} \Phi(v) \\
    & \geq 2 \left| \int_{I \cap M_{\eps,\delta}} u'_\eps \,dt \right|
    \sqrt{\inf_{v\in[-\delta,\delta]} \Phi(v)}.
  \end{align*}
  With the second integral in \eqref{eq:first_estimate} we deal as in
  the case before; the only difference is that we now find an interval
  $(a_\eps,b_\eps) \subset I \sm M_{\eps,\delta}$ such that
  $v_\eps(a_\eps) \to \delta$, $v_\eps(b_\eps) \to 1$ on one side of
  $s$, and the same with $-\delta$ and $-1$ on the other. Putting both
  estimates together and passing to the lower limit as $\eps\to0$
  yields
  \begin{equation*}
    \liminf_{\eps\to0} \F_\eps(q_\eps,v_\eps,I)
    \geq
    2 |[u](s)| \sqrt{\inf_{v \in [-\delta,\delta]} \Phi(v)}
    + 2 \int_\delta^1 \sqrt{\Phi(v)} \,dv
    + 2 \int^{-\delta}_{-1} \sqrt{\Phi(v)} \,dv,
  \end{equation*}
  and taking the supremum over all $\delta>0$ completes the proof.
\end{proof}


\subsubsection{Ghost interfaces: $s \in S_q \sm S_v$}

Finally, let $s \in S_q \sm S_v$ and $I \subset \S^1$ such that $\bar
I \cap \tilde S = \{s\}$ and the phase field $v$ is constant in $\bar
I$, say $v \equiv 1$. Then we argue as in Lemma \ref{M_eps_delta} and
\ref{JumpArgument} to find
\begin{equation*}
  \liminf_{\eps\to0} \F_\eps(q_\eps,v_\eps,I)
  \geq
  \hat\sigma |[q'](s)| + 2 \cdot 2 \int_{0}^{1} \sqrt{\Phi(v)} \,dv,
\end{equation*}
where the right-hand side is equal to $\hat\sigma |[q'](s)| + \sigma$
due to the symmetry of $\Phi$. A similar argument is true when $v
\equiv -1$ near $s$, and this concludes the proof of the lower bound
estimate for $\F_\eps$ and therewith $\E_\eps$.


\subsection{Upper bound inequality}

The final subsection is devoted to the upper bound inequality. Given
$(q,v)$ with finite energy $\E(q,v)$, we find a recovery sequence
$(q_\eps,v_\eps)$ by changing $(q,v)$ around (ghost) interfaces.  For
each $s \in S=S_q \cup S_v$ we choose two nested intervals of size of
order $\eps$: In the inner the kink is smoothed out by a linear
interpolation of a local angle function, and in the outer the phase
field transition to $\pm1$ is made.
But we have to ensure not to violate any constraint and
not to tear the curve apart when applying local changes to $q$ and
$v$.


\subsubsection{The curve}

Let $s \in S_q$ and $I \subset \S^1$ with $\bar I \cap S_q = \{s\}$.
For simplicity of notation we identify points in $I$ with coordinates
that map $s$ to the origin and formulate the following arguments for
curves and phase fields given in an interval $I$ around $s=0$.

We fix a line passing through the kink so that the tangents $q'(t)$ as
$t\to0$ from above and below meet it with angle $\bar u$ and $-\bar u$
for some $\bar u \in(0,\pi/2]$; then the kink carries the ``bending
energy'' $2\bar u \hat\sigma$.  The local angle function $u$
corresponding to the line is uniformly continuous on either side of
$t=0$, hence by decreasing $I$ we may assume that $|u|<\pi$ in $I$ and
that $u(t)$ is negative for $t<0$ and positive for $t>0$.

In the simple case that $q$ is made up of two straight lines so that
$u$ is constant on either side of zero, the linear interpolation
$u_\eps$ on an interval $I_\eps=(-\delta_\eps,\delta_\eps) \subset I$
is given by
\begin{equation*}
  u_\eps(t) =
  \begin{cases}
    - \bar u                     & : t < - \delta_\eps, \\
    \frac{\bar u}{\delta_\eps} t & : |t| \leq \delta_\eps, \\
    \bar u                       & : t > \delta_\eps,
  \end{cases}
\end{equation*}
where $\delta_\eps$ is intended to go to zero as $\eps$ does.  For the
curve $q_\eps$, defined by $u_\eps$ and one endpoint of $q_\eps(I)$
equal to the corresponding endpoint of $q(I)$, we compute
\begin{equation*}
  \F_\eps(q_\eps,0,I_\eps)
  = 2 \frac{\eps}{\delta_\eps} \bar u^2 + 2 \frac{\delta_\eps}{\eps} \Phi(0)
  \geq 4 \bar u \sqrt{\Phi(0)}
  = \hat\sigma |[q']|,
\end{equation*}
using Young's inequality, and equality holds if and only if
\begin{equation*}
  \delta_\eps = \frac{\bar u}{\sqrt{\Phi(0)}} \eps
  = \frac{|[q']|}{2\sqrt{\Phi(0)}} \eps.
\end{equation*}
With this $\delta_\eps$ we return to the general case: the linear
interpolation of the angle on $I_\eps$ now is
\begin{equation*}
  u_\eps(t) =
  \begin{cases}
    u(t)
    &: \delta_\eps < |t|, \\
    \frac{\left( u(\delta_\eps)-u(-\delta_\eps) \right)}{2\delta_\eps} t +
    \frac{\left( u(\delta_\eps)+u(-\delta_\eps) \right)}{2}
    &: \delta_\eps \geq |t|,
  \end{cases}
\end{equation*}
and similarly as above we get
\begin{align*}
  \F_\eps(q_\eps,0,I_\eps)
  &= \frac{|u(\delta_\eps)-u(-\delta_\eps)|^2}{2 \bar u} \sqrt{\Phi(0)} +
  2 \bar u \sqrt{\Phi(0)} \\
  &\to 4 \bar u \sqrt{\Phi(0)}
  = \hat\sigma |[q']|.
\end{align*}

But as noted before, just replacing $q$ by $q_\eps$ on $I$ is not
admissible since the second endpoint of $q(I)$ is not reached by
$q_\eps(I)$ and the whole curve would become discontinuous.  Recalling
the relation of tangent and angle function, the condition on the
endpoints can be expressed as condition for $u_\eps$ by requiring
\begin{equation}
  \label{eq:constraints}
  \begin{split}
    \int_I \cos u_\eps(t) \,dt
    &= \int_I \cos u(t) \,dt
    =: C_0,\\
    \int_I \sin u_\eps(t) \,dt
    &= \int_I \sin u(t) \,dt
    =: S_0.
  \end{split}
\end{equation}
We amend the linear interpolation $u_\eps$ by adding a perturbation
that on the one hand is sufficiently small not to contribute to the
energy in the limit $\eps\to0$, but on the other hand corrects the
defect in the constraints \eqref{eq:constraints}. We will find two
smooth functions $f$ and $g$, which depend on $u$ in $I$, and two
parameters $\alpha_\eps$ and $\beta_\eps$ such that $u_\eps +
\alpha_\eps f + \beta_\eps g$ is admissible for sufficiently small
$\eps$; the argument is simply the Implicit Function Theorem applied
to
\begin{equation*}
  P(\eps,\alpha,\beta):=
  \begin{pmatrix}
    C_0 -
    \int_I \cos \left(
      u_\eps + \alpha f + \beta g \right) dt \\
    -S_0 +
    \int_I \sin \left(
      u_\eps + \alpha f + \beta g \right) dt \\
  \end{pmatrix}.
\end{equation*}

\begin{lemma}
  Let $q$, $u$ and $u_\eps$ be as above. There exist two functions
  $f,g \in C^\infty_0(I)$ such that there are $\eps_0>0$ and functions
  $\eps \mapsto \alpha_\eps$, $\eps \mapsto \beta_\eps$, continuously
  differentiable in $[0,\eps_0)$ that satisfy
  $P(\eps,\alpha_\eps,\beta_\eps) = 0$ for all $\eps \in [0,\eps_0)$.
\end{lemma}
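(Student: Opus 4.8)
The plan is to produce $f,g,\alpha_\eps,\beta_\eps$ from the Implicit Function Theorem applied to $P$ at $(\eps,\alpha,\beta)=(0,0,0)$, so the first thing to check is that this point is a zero of $P$. At $\eps=0$ the interpolation width $\delta_\eps=\frac{\bar u}{\sqrt{\Phi(0)}}\eps$ vanishes, hence $u_0=u$ away from the single point $t=0$, and the definitions of $C_0$ and $S_0$ give $\int_I\cos u_0\,dt=C_0$ and $\int_I\sin u_0\,dt=S_0$. Thus $P(0,0,0)=0$ for every choice of $f,g$, and since $P(\eps,\alpha,\beta)=0$ is exactly the pair of constraints \eqref{eq:constraints} for $u_\eps+\alpha f+\beta g$, a solution branch $\eps\mapsto(\alpha_\eps,\beta_\eps)$ emanating from the origin is precisely what the lemma demands.

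Next I would verify the hypotheses of the theorem. Differentiating under the integral sign, $P$ is continuously differentiable in $(\alpha,\beta)$ with
\[
  D_{(\alpha,\beta)}P(0,0,0)=
  \begin{pmatrix}
    \int_I f\sin u\,dt & \int_I g\sin u\,dt\\
    \int_I f\cos u\,dt & \int_I g\cos u\,dt
  \end{pmatrix}.
\]
The dependence on $\eps$ is governed entirely by the explicit $\eps$-scaling of $u_\eps$ on $I_\eps=(-\delta_\eps,\delta_\eps)$: after the substitution $t=\delta_\eps\tau$ the two integrals over $I_\eps$ become $\delta_\eps$ times smooth functions of the data $\delta_\eps$, $u(\delta_\eps)$, $u(-\delta_\eps)$, so that $P(\cdot,\alpha,\beta)$ is $C^1$ up to $\eps=0$ with $P(\eps,0,0)=O(\eps)$. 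Verifying this regularity carefully, using only $u\in H^1$ on either side of the kink, is a routine but slightly technical point; the genuine issue is the invertibility of the matrix above, which is where the choice of $f$ and $g$ enters.

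The crux is therefore to pick $f,g\in C^\infty_0(I)$ rendering the displayed matrix nonsingular. Its columns are the images of $f$ and $g$ under the linear map $h\mapsto\big(\int_I h\sin u\,dt,\int_I h\cos u\,dt\big)$, so invertibility for some $f,g$ is equivalent to this map being onto $\R^2$, which in turn holds if and only if $\sin u$ and $\cos u$ are linearly independent in $L^2(I)$. Granting this, one may simply take $f$ and $g$ to be smooth, compactly supported $L^2$-approximations of $\sin u$ and $\cos u$, so that the matrix is a small perturbation of the positive definite Gram matrix of the pair $(\sin u,\cos u)$ and hence invertible; the Implicit Function Theorem then yields $\eps_0>0$ and $C^1$ maps $\eps\mapsto\alpha_\eps,\beta_\eps$ on $[0,\eps_0)$ with $P(\eps,\alpha_\eps,\beta_\eps)=0$ and $\alpha_0=\beta_0=0$, as required.

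The main obstacle is thus the linear independence of $\sin u$ and $\cos u$, together with the one configuration in which it fails. A relation $a\sin u+b\cos u=0$ a.e.\ forces $u$ to lie in a fixed coset of $\pi\Z$; being continuous on each side of $0$ with $u<0$ for $t<0$ and $u>0$ for $t>0$, the angle $u$ is then constant on either side, and a short computation shows this is compatible with $(a,b)\neq0$ only for $\bar u=\pi/2$, that is the symmetric tangent-reversal spike with $\cos u\equiv0$. In that degenerate case the second row of the matrix vanishes, but $u_\eps$ is odd about $t=0$, so the sine constraint in \eqref{eq:constraints} holds identically in $\alpha$; choosing $f$ odd (and $\beta_\eps\equiv0$) keeps it satisfied while leaving $\int_I f\sin u\,dt\neq0$, and a one-parameter Implicit Function Theorem in $\alpha$ then corrects the remaining cosine constraint. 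Hence the construction goes through in all cases.
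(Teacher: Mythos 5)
Your proof is correct and follows essentially the same route as the paper: the Implicit Function Theorem at $(0,0,0)$, with non-degeneracy reduced to the linear independence of $\sin u$ and $\cos u$ on $I$, and the single exceptional case $u=\tfrac{\pi}{2}\sign t$ handled by oddness and a one-parameter version of the theorem. The only (cosmetic) difference is that you choose $f,g$ as approximations of $\sin u,\cos u$ to obtain a near-Gram matrix, whereas the paper picks $f\in\ker T_c$ with $T_sf=1$ and $g$ with $T_cg=1$ to make the derivative triangular.
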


\begin{proof}
  Writing $u$ as sum of a continuous function and a piecewise constant
  jump function and $u_\eps$ correspondingly, it is easily seen that
  $P$ is a $C^1$ function for sufficiently small $\eps \geq 0$.  To
  apply the Implicit Function Theorem we have to show that
  $\partial_{(\alpha,\beta)} P(0,0,0)$ is non-singular. To this end we
  define two linear continuous functionals $T_s,T_c: C^\infty_0(I) \to
  \R$,
  \begin{equation*}
    T_s \phi = \int_I \phi(t) \sin u(t) \,dt
    \qquad \text{and} \qquad
    T_c \phi = \int_I \phi(t) \cos u(t) \,dt,
  \end{equation*}
  and compute
  \begin{equation*}
    \partial_{(\alpha,\beta)} P(0,0,0) =
    \begin{pmatrix}
      T_s f & T_s g \\
      T_c f & T_c g
    \end{pmatrix}.
  \end{equation*}

  Assume for the moment that neither $T_s$ nor $T_c$ is constantly
  zero.
  Suppose for contradiction that
  $\ker T_s = \ker T_c$, which implies the
  existence of $\lambda\not=0$ such that $T_s = \lambda T_c$; hence
  $\sin u = \lambda \cos u$ in $I$. This can only be true if $u$ is
  piecewise constant, but then $\sin \bar u = \lambda \cos \bar u$ and
  $-\sin \bar u = \lambda \cos \bar u$ contradict each other due to
  $\lambda\not=0$.

  Thus $\ker T_s \not= \ker T_c$, say $\ker T_s \cap \ker T_c
  \subsetneq \ker T_c$, and there is $f \in \ker T_c$ such that $T_s
  f=1$. After fixing any $g$ with $T_c g=1$, the partial derivative is
  \begin{equation*}
    \partial_{(\alpha,\beta)} P(0,0,0) =
    \begin{pmatrix}
      1 & * \\
      0 & 1
    \end{pmatrix},
  \end{equation*}
  where $*$ is some real number. This matrix is non-singular,
  hence all prerequisites of the Implicit Function Theorem are
  satisfied and the claim is proved.

  It remains to consider the situation when one of the operators is
  zero.  This is certainly not $T_s$, since $T_s\phi=0$ for all $\phi
  \in C^\infty_0(I)$ implies $\sin u=0$ and, due to $|u|<\pi$, $u
  \equiv 0$ in $I$.  There is, however, a valid situation such that
  $T_c=0$, and that is if and only if $u(t)=\pi/2\sign t$ is piecewise
  constant with a jump of height $\pi$.  But then the second component
  of $P(\eps,0,\beta)$ is zero for all $\eps\geq0$, all $\beta \geq 0$
  and any anti-symmetric function $g$.  Fixing such $g$ with $T_s g
  \not=0$, we can thus apply the Implicit Function Theorem to $\tilde
  P(\eps,\beta)=P_1(\eps,0,\beta)$ and get the desired result with
  $f=\alpha_\eps=0$.
\end{proof}

So we have found an approximation $u_\eps + \alpha_\eps f + \beta_\eps
g$ on $I$ that satisfies the constraints. Due to $\alpha_0=\beta_0=0$,
$\alpha_\eps$ and $\beta_\eps$ converge to zero as $\eps\to0$; then
$u_\eps + \alpha_\eps f + \beta_\eps g \to u$ in $L^1(I)$ and $\hat
q_\eps$, defined by $u_\eps + \alpha_\eps f + \beta_\eps g$ and the
endpoints of $q(I)$, converges to $q$ in $H^1(I;\R^2)$ and satisfies
\begin{equation}
  \label{eq:curve_recovery}
  \lim_{\eps\to0} \F(\hat q_\eps,0,I_\eps) = \hat\sigma |[q']|.
\end{equation}
As the number of kinks is finite, the construction can be made for
each $s \in S_q$ on an interval $I^s_\eps$.


\subsubsection{Phase field recovery and energy estimates}

Now we construct a recovery sequence for the phase field which is in
line with $q_\eps$ from above.  It is well-known, see for example
\cite{A00}, that the optimal profile for a transition of
$v_\eps$ from $-1$ to $+1$ is obtained by minimising
\begin{equation*}
  G_\eps(v) = \int_\R \eps |v'|^2 + \tfrac{1}{\eps} \Phi(v) \,dt
\end{equation*}
among functions $v$ that satisfy $v(0)=0$ and $v(\pm\infty)=\pm1$.
Indeed, setting $v_\eps(t)=v(t/\eps)$ we observe
\begin{equation*}
  G_\eps(v_\eps) = G_1(v)
  \geq 2 \int_\R \sqrt{\Phi(v)}v' \,dt
  = 2 \int_{-1}^1 \sqrt{\Phi(v)} \,dv
  = \sigma.
\end{equation*}
Equality holds if and only if $v'=\sqrt{\Phi(v)}$, which admits a
local solution $p$ with initial condition $p(0)=0$ because
$\sqrt{\Phi(\cdot)}$ is continuous. Obviously the constants $+1$ and
$-1$ are a global super- and sub-solution of the problem, hence $p$
can be extended to the whole real line.  Since $\Phi(p)>0$ for
$p\in(-1,+1)$, $p(t)$ converges to $\pm1$ as $t\to\pm\infty$. Thus
$p(t/\eps)$ minimises $G_\eps$. Due to the symmetry of $\Phi$ we can
presume $-p(-t)=p(t)$ and need to know the profile only for $t\geq0$.

We assume again, that by identification with appropriate coordinates
the (ghost) interface is located at $s=0$ and that the phase field is
given on an interval $I$ containing $s$. The building block for the
recovery sequence is
\begin{equation*}
  p_\eps(t) =
  \begin{cases}
    0
    & : 0 \leq t \leq \delta_\eps, \\
    p\left( \frac{t-\delta_\eps}{\eps} \right)
    & : \delta_\eps < t \leq \delta_\eps+\sqrt{\eps}, \\
    p(1/\sqrt{\eps}) + \tfrac{1}{\eps} \left(
      t-\delta_\eps-\sqrt{\eps} \right)
    & : \delta_\eps+\sqrt{\eps} < t \leq \delta_\eps+\sqrt{\eps}
    + \eps \left(
      1-p(1/\sqrt{\eps}) \right), \\
    1
    & : \delta_\eps+\sqrt{\eps} + \eps \left(
      1-p(1/\sqrt{\eps}) \right) < t,
  \end{cases}
\end{equation*}
which connects $p_\eps=0$ and $p_\eps=1$ by a transition according to
the optimal profile and a linear function, see Figure \ref{fig:p_eps};
the length of $\{p_\eps=0\}$ is chosen consistently with the recovery
of the curve, that is $\delta_\eps=|[q'](0)|/(2\sqrt{\Phi(0)}) \cdot
\eps$. In the next lemma we estimate the interface energy of the
nonzero part of $p_\eps$.

\begin{figure}
  \centering
  \includegraphics{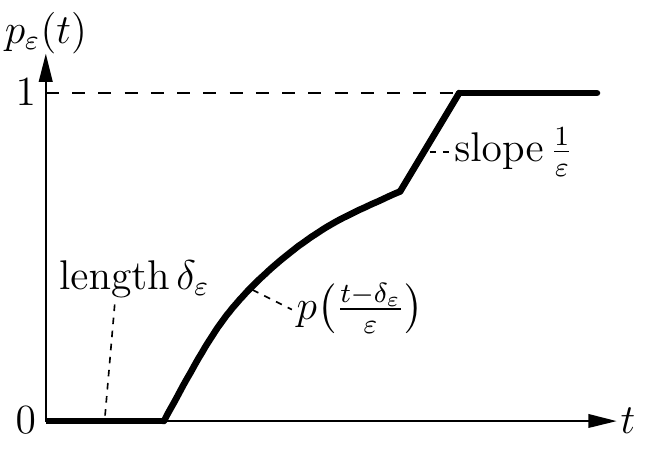}
  \caption{Construction of $p_\eps$, consisting of space for the curve
    recovery, the optimal profile and the connection to $1$.}
  \label{fig:p_eps}
\end{figure}

\begin{lemma}
  \label{p_eps_energy_limit}
  For any curve $q \in H^2(I\sm\{0\};\R^2)$ we have
  \begin{equation*}
    \limsup_{\eps\to0} \F_\eps(q,p_\eps,I\cap\{t>\delta_\eps\})
    \leq \tfrac{\sigma}{2}.
  \end{equation*}
\end{lemma}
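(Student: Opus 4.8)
The plan is to split $\F_\eps(q,p_\eps,I\cap\{t>\delta_\eps\})$ into its curvature part and its interface part and to treat the latter separately on the three pieces on which $p_\eps$ is defined. The curvature contribution $\eps\int_{I\cap\{t>\delta_\eps\}}\kappa_q^2\,dt$ is immediate: since $q\in H^2(I\sm\{0\};\R^2)$ we have $\kappa_q\in L^2(I)$, so this term is bounded above by $\eps\int_I\kappa_q^2\,dt$ and hence vanishes as $\eps\to0$. It therefore remains to estimate the Modica--Mortola part $\int \eps p_\eps'^2 + \tfrac1\eps\Phi(p_\eps)\,dt$.

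On the first piece, where $p_\eps(t)=p\!\left((t-\delta_\eps)/\eps\right)$, I would substitute $s=(t-\delta_\eps)/\eps$ to rewrite the interface energy as $\int_0^{1/\sqrt{\eps}} p'(s)^2+\Phi(p(s))\,ds$. Using that the optimal profile solves $p'=\sqrt{\Phi(p)}$, the integrand equals $2\sqrt{\Phi(p)}\,p'$, so this piece is bounded above by $\int_0^\infty p'^2+\Phi(p)\,ds=2\int_0^1\sqrt{\Phi(v)}\,dv$. By the symmetry of $\Phi$ together with $\sigma=2\int_{-1}^1\sqrt{\Phi(v)}\,dv$, this equals exactly $\sigma/2$, the value we aim for; note that this bound holds for every $\eps$, not merely in the limit.

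The remaining task is to show that the linear connecting piece and the constant piece contribute nothing as $\eps\to0$. On the constant piece $p_\eps\equiv1$ both $p_\eps'$ and $\Phi(p_\eps)$ vanish. On the linear piece, where $p_\eps'=1/\eps$ on an interval of length $\eps\bigl(1-p(1/\sqrt{\eps})\bigr)$, the gradient term integrates to $1-p(1/\sqrt{\eps})$, while the potential term, after the substitution $w=p_\eps(t)$, becomes $\int_{p(1/\sqrt{\eps})}^1\Phi(w)\,dw$. Since $p(1/\sqrt{\eps})\to1$ as $\eps\to0$ — recall that the profile tends to $1$ at $+\infty$ — both of these tend to zero. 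Collecting the three pieces and the curvature term yields $\limsup_{\eps\to0}\F_\eps(q,p_\eps,I\cap\{t>\delta_\eps\})\le\sigma/2$.

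The argument is essentially a bookkeeping exercise, so I do not expect a genuine obstacle. The only point requiring care is the interplay of the scales $\sqrt{\eps}$ and $\eps$ built into the definition of $p_\eps$: they are precisely what forces both the gradient contribution $1-p(1/\sqrt{\eps})$ of the linear piece and the curvature term $\eps\int_I\kappa_q^2\,dt$ to zero, while the truncation $1/\sqrt{\eps}$ of the profile still diverges, so that the optimal-profile piece captures the full $\sigma/2$ in the limit.
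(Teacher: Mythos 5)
Your proof is correct and follows essentially the same route as the paper: the curvature term vanishes since $\kappa_q\in L^2(I)$, the rescaled optimal profile piece is bounded by $2\int_0^1\sqrt{\Phi(v)}\,dv=\sigma/2$, and the linear and constant pieces contribute $o(1)$. The only cosmetic difference is that you bound the potential term on the linear piece by $\int_{p(1/\sqrt{\eps})}^1\Phi(w)\,dw$ via substitution, whereas the paper uses the cruder bound $\left(1-p\left(1/\sqrt{\eps}\right)\right)\sup_{[0,1]}\Phi$; both tend to zero.
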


\begin{proof}
  The curve satisfies $\kappa_q \in L^2(I)$, so the integral of $\eps
  \kappa^2$ vanishes as $\eps\to0$.  The other terms are for
  sufficiently small $\eps$ easily estimated by
  \begin{equation*}
    \int_{I\cap\{t>\delta_\eps\}}
    \eps |p_\eps'|^2 + \tfrac{1}{\eps} \Phi(p_\eps) \,dr
    \leq
    \int_{0}^{1/\sqrt{\eps}}
    |p'(r)|^2 + \Phi(p(r)) \,dr
    + \left(1-p\left(1/\sqrt{\eps}\right)\right)
    ( 1 + \sup_{[0,1]} \Phi ).
  \end{equation*}
  Taking the upper limit $\eps\to0$ and recalling the symmetry of $\Phi$
  yield the result.
\end{proof}

It is now evident how to construct the recovery sequence for an
interface: we simply take
\begin{equation*}
  v_\eps(t)=
  \begin{cases}
    p_\eps(t)   & : 0 \leq t, \\
    -p_\eps(-t) & : 0 > t
  \end{cases}
\end{equation*}
or the negative of it. Then $v_\eps$ converges to $v$ in $L^1(I)$; it
follows from Lemma \ref{p_eps_energy_limit} that these
approximations demonstrate the desired energy behaviour; in
addition the volume constraint is not affected by substituting
$v_\eps$ for $v$ due to symmetry of $p_\eps$.

In the case of a ghost interface we use the combination of $p_\eps(t)$
and $p_\eps(-t)$ or its negative. Again, convergence and energy
behaviour are as required, but the volume constraint is violated.  To
get an admissible recovery sequence we add a small correction as we
did with the angle function. Let $h:I \to \R$ be smooth, have compact
support in $I \cap \R_{>0}$ and satisfy $\int_I h \,dt=1$. Then the
volume constraint is satisfied by $v_\eps+\gamma_\eps h$, if
\begin{equation*}
  \gamma_\eps = \int_I v-v_\eps \,dt.
\end{equation*}
Since
\begin{equation*}
  \int_{\delta_\eps}^{\delta_\eps+\sqrt{\eps}} 1-p_\eps \,dt
  = \sqrt{\eps}\int_0^1 1 - p(t/\sqrt{\eps}) \,dt
  = o(\sqrt{\eps}),
\end{equation*}
$\gamma_\eps$ is of order $o(\sqrt{\eps})$, too.  This is enough to
still ensure convergence $v_\eps+\gamma_\eps h \to v$ in $L^1(I)$ and
the energy inequality
\begin{equation}
  \label{eq:phase_field_limsup}
  \limsup_{\eps\to\infty}
  \F_\eps(q,p_\eps+\gamma_\eps h,I \cap \{t>\delta_\eps\}))
  \leq
  \tfrac{\sigma}{2},
\end{equation}
thanks to
\begin{equation*}
  \tfrac{1}{\eps} \Phi(\pm1+\gamma_\eps h)
  = \tfrac{1}{\eps} \left(
    \Phi(\pm1) + \gamma_\eps h \Phi'(\pm1) +
    O(\gamma^2_\eps) \right)
  = o(1).
\end{equation*}
Therefore $v$ can be recovered around each interface, and the sequence for
$v$ on $\S^1$ is now built by substituting $v_\eps+\gamma_\eps h$
locally for $v$.  Combining the constructions for phase field and
curve we have the following result.

\begin{corollary}
  For $(q,v) \in \C \times \P$ and sufficiently
  small $\eps$ there is $(q_\eps,v_\eps) \in \C_\eps \times
  \P_\eps$ such that $q_\eps \to q$ in $H^1(\S^1;\R^2)$,
  $v_\eps \to v$ in $L^1(\S^1)$ and
  \begin{equation*}
    \limsup_{\eps\to0} \E_\eps(q_\eps,v_\eps) \leq \E(q,v).
  \end{equation*}
\end{corollary}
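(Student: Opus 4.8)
The statement is the assembly step of the upper-bound inequality: almost all of the local constructions are already in place, so the plan is to patch them into a single admissible pair and to add up the energy contributions. First I would fix pairwise disjoint open intervals $I^s$, one around each $s \in S = S_q \cup S_v$, each so small that $\bar I^s$ meets $S$ only in $s$ and, when $s$ is a ghost interface, $v$ is constant on $\bar I^s$. On the regular part $R := \S^1 \sm \bigcup_s I^s$ I would leave the pair unchanged, setting $q_\eps = q$ and $v_\eps = v$; here $v \equiv \pm1$ is smooth and $q \in H^2(R;\R^2)$, so this part already lies in $\C_\eps \times \P_\eps$ and carries no interface energy. On each $I^s$ I would insert the recovery already built: the smoothed angle function $u_\eps + \alpha_\eps f + \beta_\eps g$ together with the matching curve $\hat q_\eps$ whenever $s \in S_q$, and the phase-field profile assembled from $p_\eps$ (with the mass correction $\gamma_\eps h$ at ghost interfaces) for the transition of $v$.

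The first task is admissibility. Since $f,g \in C^\infty_0(I^s)$, the perturbed angle equals $u$ near $\partial I^s$, so the glued tangent is continuous and $q_\eps \in H^2(\S^1;\R^2)$ with $|q_\eps'|=1$; the Implicit Function Theorem lemma ensures the endpoint conditions \eqref{eq:constraints}, so the pieces join without tearing the curve and $q_\eps \in \C_\eps$. For the phase field, $p_\eps$ reaches $\pm1$ inside the interval and is Lipschitz, so $v_\eps \in H^1(\S^1)$; the volume constraint is preserved at genuine interfaces by the symmetry of the profile and restored at ghost interfaces by $\gamma_\eps h$, whose amplitude is $o(\sqrt\eps)$, so $v_\eps \in \P_\eps$. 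Because every modification lives on a set of measure $O(\sqrt\eps)$ and the corrections vanish, the convergences $q_\eps \to q$ in $H^1(\S^1;\R^2)$ and $v_\eps \to v$ in $L^1(\S^1)$ follow exactly as in the curve and phase-field subsections.

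For the energy I would split $\E_\eps(q_\eps,v_\eps)$ into the curvature term and the sum of $\F_\eps(q_\eps,v_\eps,I^s)$ over $s$ plus $\F_\eps(q_\eps,v_\eps,R)$. On $R$ the interface part vanishes in the limit since $\Phi(v)=0$ and $\eps\int_R \kappa_q^2 \to 0$. On each $I^s$ I would combine \eqref{eq:curve_recovery}, Lemma \ref{p_eps_energy_limit} and \eqref{eq:phase_field_limsup}: the flat region $\{|t|\le\delta_\eps\}$, where $v_\eps=0$ and the angle is linear, produces exactly $\hat\sigma|[q'](s)|$, while the two one-sided transitions of $p_\eps$ each contribute at most $\sigma/2$, summing to $\sigma$; for an interface without a kink one simply takes $\delta_\eps=0$ and recovers $\sigma$. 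Hence $\limsup_{\eps\to0}\F_\eps(q_\eps,v_\eps,I^s) \le \sigma + \hat\sigma|[q'](s)|$.

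It remains to handle the curvature term, and this is where the construction must be coordinated. The choice $\delta_\eps = |[q'](s)|/(2\sqrt{\Phi(0)})\,\eps$ makes the flat region $\{v_\eps=0\}$ coincide with the subinterval $I^s_\eps$ on which $q_\eps$ carries its large concentrated curvature, so there $v_\eps^2(\kappa_{q_\eps}-C(v_\eps))^2 \equiv 0$. Off $I^s_\eps$ one has $\kappa_{q_\eps}\to\kappa_q$ in $L^2$, $v_\eps^2 \to 1$ and $C(v_\eps)\to C(v)$, while the transition zone has shrinking measure, so by absolute continuity of the integral the curvature term converges to $\int_{\S^1\sm S}(\kappa_q-C(v))^2\,dt$. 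Adding the three pieces yields $\limsup_{\eps\to0}\E_\eps(q_\eps,v_\eps) \le \E(q,v)$. The main obstacle, and the point on which everything hinges, is exactly this matching: the phase field must vanish precisely where the smoothed kink concentrates curvature, so that the singular curvature never enters $\int v_\eps^2(\kappa_{q_\eps}-C(v_\eps))^2$ yet is correctly charged as bending energy through $\eps\int\kappa_{q_\eps}^2$ balanced against $\tfrac1\eps\Phi(0)$ on $I^s_\eps$; preserving this balance while simultaneously enforcing closure of the curve and the volume constraint is the delicate part.
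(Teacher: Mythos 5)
Your proposal is correct and follows essentially the same route as the paper: leave $(q,v)$ untouched outside small intervals $I^s$, invoke the local curve and phase-field constructions with their admissibility corrections, sum the local estimates \eqref{eq:curve_recovery} and \eqref{eq:phase_field_limsup} to bound $\F_\eps$ on each $I^s$ by $\sigma+\hat\sigma|[q'](s)|$, and control the curvature integral by observing that $v_\eps$ vanishes exactly on the set $I^s_\eps$ where $\kappa_{q_\eps}$ concentrates. The only additions beyond the paper's argument are explicit but routine details (admissibility checks, the $\delta_\eps=0$ case for kink-free interfaces), which do not change the structure of the proof.
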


\begin{proof}
  Denote by $q_\eps$ and $v_\eps$ the recovery sequences from this
  subsection. The convergence results have already been established;
  for the inequality note that for each $s \in S$ we have intervals
  $I^s_\eps \subset I^s$ such that the kink is smoothed out in
  $I^s_\eps$ and the phase field transition is made in $I^s \sm
  I^s_\eps$; so combining the estimates \eqref{eq:curve_recovery} and
  \eqref{eq:phase_field_limsup} we get
  \begin{equation}
    \label{eq:limsup_Is}
    \limsup_{\eps\to0} \F_\eps(q_\eps,v_\eps,I^s)
    =
    \limsup_{\eps\to0} \big( \F_\eps(q_\eps,0,I^s_\eps) +
      \F_\eps(q,v_\eps,I^s \sm I^s_\eps) \big)
    \leq
    \F(q,v,I^s).
  \end{equation}
  Outside $J:=\bigcup_{s \in S} I^s$ phase field and curve remain
  unchanged, therefore
  \begin{equation*}
    \limsup_{\eps\to0} \F_\eps(q_\eps,v_\eps,\S^1 \sm J)
    =
    \limsup_{\eps\to0}\F_\eps(q,v,\S^1 \sm J)
    =
    0.
  \end{equation*}
  Together with \eqref{eq:limsup_Is} summed over all $s \in S$ this
  yields the upper bound inequality for $\F_\eps(q_\eps,v_\eps,\S^1)$
  and $\F(q,v,\S^1)$. Finally, because $v_\eps$ is zero where
  $\kappa_{q_\eps}$ differs from $\kappa_q$ by more than the small
  correction for the endpoint constraints, we have
  \begin{align*}
    \int_{\S^1} v_\eps^2 \left(
      \kappa_{q_\eps} - C(v_\eps) \right)^2 dt
    &\leq
    \int_{\S^1 \sm \cup I^s_\eps} \left(
      \kappa_q - C(v_\eps) \right)^2 dt + o(1) \\
    &\leq
    \int_{\S^1 \sm S} \left(
      \kappa_q - C(v_\eps) \right)^2 dt + o(1),
  \end{align*}
  and taking the upper limit finishes the proof.
\end{proof}


\section*{Acknowledgement}

This work was supported by the EPSRC Science and Innovation award to
the Oxford Centre for Nonlinear PDE (EP/E035027/1).
The author thanks Barbara Niethammer and Michael Herrmann for their
support, encouragement and many stimulating discussions.
Finally, the author wishes to thank the referees whose careful and
competent work improved the exposition to a great extent.


\bibliography{snapping_curves}
\bibliographystyle{abbrv}

\end{document}